\documentclass[journal,twoside,web]{ieeecolor}

\usepackage{amsmath,amssymb,amsfonts}
\usepackage{algorithmic}
\usepackage{graphicx}
\usepackage{textcomp}
\usepackage{graphicx}
\usepackage{subfigure}
\usepackage{epsfig} 
\usepackage{cite}
\usepackage{lcsys}

\usepackage{graphics} 
\usepackage{epsfig} 
\usepackage{amsmath} 
\usepackage{amssymb}  
\usepackage{algorithm}
\usepackage{algorithmic}
\usepackage{verbatim}
\allowdisplaybreaks[4]
\usepackage{tikz}
\usepackage{booktabs}
\usepackage{multirow}

\newtheorem{proposition}{Proposition}
\newtheorem{theorem}{Theorem}
\newtheorem{lemma}{Lemma}
\newenvironment{remark}
  {\par\smallskip\textit{Remark.}\ \normalfont}
  {\par\smallskip}

\usepackage{mathtools}
\usepackage{comment}
\usepackage[table,dvipsnames]{xcolor}
\definecolor{subsectioncolor}{rgb}{0.067,0.627,0.859}
\definecolor{nblue}{rgb}{0,0.263,0.576}
\definecolor{mblue}{rgb}{0.075,0.541,0.855}
\usepackage{soul}

\usepackage{hyperref}
\hypersetup{
    colorlinks=true,
    linkcolor=blue,
    citecolor=blue,
    urlcolor=blue
}

\DeclareSymbolFont{bbold}{U}{bbold}{m}{n}
\DeclareSymbolFontAlphabet{\mathbbold}{bbold}

\usepackage{accents}

\DeclareMathOperator{\img}{im}
\DeclareMathOperator{\spn}{span}
\DeclareMathOperator{\inte}{int}

\newcommand{\E}{\mathcal{E}}
\newcommand{\N}{\mathcal{N}}

\usepackage{eso-pic}

\newcommand{\IEEEcopyrightnotice}{%
\AddToShipoutPictureFG*{%
  \AtPageLowerLeft{%
    \raisebox{0.3in}{%
      \hspace*{\dimexpr(\paperwidth-\textwidth)/2\relax}%
      \parbox{\textwidth}{\centering\footnotesize
      \copyright~2026 IEEE. Personal use of this material is permitted.
      Permission from IEEE must be obtained for all other uses, including
      reprinting/republishing this material for advertising or promotional purposes,
      creating new collective works for resale or redistribution to servers or lists,
      or reuse of any copyrighted components of this work in other works.}%
    }%
  }%
}%
}

\begin{document}

\def\BibTeX{{\rm B\kern-.05em{\sc i\kern-.025em b}\kern-.08em
    T\kern-.1667em\lower.7ex\hbox{E}\kern-.125emX}}
\markboth{\journalname, VOL. XX, NO. XX, XXXX 2017}
{Author \MakeLowercase{\textit{et al.}}: Preparation of Papers for IEEE Control Systems Letters (August 2022)}

\title{A Cycle-Based Solvability Condition for Real Power Flow Equations}

\author{
    Puskar Neupane,~\IEEEmembership{Student Member,~IEEE,}
    and Bai Cui,~\IEEEmembership{Member,~IEEE}
\thanks{The authors are with the Department of Electrical and Computer Engineering, Iowa State University, Ames, IA, USA. Emails: {\tt \{puskar, baicui\}@iastate.edu}.} \thanks{This material is based upon work supported by the U.S. National Science Foundation under Grant No. DMS-2523935.}
}

\maketitle
\thispagestyle{empty}
\IEEEcopyrightnotice

\begin{abstract}
Certifying power flow solvability is important for
reliable power system operations under volatile operating conditions, but solving power flow equations repeatedly can be costly and may encounter convergence issues. In this paper, we develop an explicit cycle-based solvability condition for the lossless real power flow equations on meshed networks. We decompose every feasible nodal balance solution into a particular flow plus a cycle flow correction vector. The power flow problem is then reduced to enforcing edge-wise feasibility and cycle consistency. We show that the cycle consistency function is strongly monotone and is the gradient of a strongly convex energy function. By exploiting these properties, we derive an explicit condition for the existence and uniqueness of a power flow solution with bounded angle difference. The resulting condition is invariant under the choice of cycle basis and can be verified through simple algebraic computations. Numerical results on standard test systems show that the proposed condition is significantly less conservative than existing sufficient conditions and closely approximates true loading limits.
\end{abstract}

\begin{IEEEkeywords}
Power Systems, Solvability, AC Power Flow, Numerical Algorithm
\end{IEEEkeywords}

\section{Introduction}

\IEEEPARstart{T}{he} nonlinear nature of the AC power flow equations makes the computation of power flow solutions challenging. One common approach to obtaining approximate solutions is through iterative numerical methods. Among these, Newton-Raphson (NR) method is the most widely used technique in practice. However, the convergence of the NR method depends strongly on the choice of the initial guess and is not guaranteed in general. Moreover, even when convergence occurs, the method may fail to converge to the physically relevant operating solution. For example, in heavily loaded systems, multiple power flow solutions can coexist and may lie very close to each other, increasing the possibility that NR converges to an unstable solution. These limitations have motivated research into alternative iterative methods with improved convergence and robustness properties {\cite{FPPF_Chen}}. In parallel, analytical approaches have been developed to characterize the existence of power flow solutions in lieu of numerical solution routines~{\cite{energy_function, Park2021,  CUI2025112158, Porco_II}}. They provide fast certificates of solution existence without solving power flow equations and are particularly suitable in scenarios where repeated power flow evaluations are needed. The problem of determining solvability limits has therefore attracted considerable attention in recent decades. 

In the early 1980s, Wu and Kumagai~\cite{Wu1982} proposed a sufficient condition for the existence of power flow solution. Later in the decade, Thorp \textit{et al.}~\cite{THORP198666} investigated the existence of solutions to the reactive power balance problem and proposed conditions for assessing solvability following system disturbances. Subsequently, an analytical result on the existence and uniqueness of the power flow solution for radial networks was proposed~\cite{Chiang1990}. Other early works along this line of research include~\cite{Marija_1992, Miu2000}.

The derivation of sufficient conditions has also been studied in different ways in the literature. The paper in~\cite{energy_function} used an energy function as a tool in order to propose necessary and sufficient conditions for the existence of a power flow solution. The topology of the power network has also been leveraged in prior work to derive solvability conditions for the AC power flow equations~\cite{Park2021}. In~\cite{Porco_I}, a novel iterative algorithm is proposed for solving power flow equations in both meshed and radial networks, while~\cite{Porco_II} derives a sufficient condition for the existence of a high-voltage solution in radial networks. In a related line of work, power system networks are modeled as systems of coupled oscillators, and solvability conditions are obtained by drawing analogies with synchronization phenomena in Kuramoto oscillator models. In particular, the work in~\cite{Cutset_Projection} studies the lossless power flow problem by expressing the dynamics in an edge-balance fixed-point form via a cutset projection operator and establishes existence and uniqueness of solutions using the Brouwer Fixed Point Theorem.

The work in~\cite{Bolognani} proposed a sufficient condition for the coupled full-power model to verify the existence of power flow solutions and provided an approximate solution to the nonlinear power flow equations. Having sufficient conditions for the solvability of the coupled full power flow equation reduces computational burden~\cite{Cong2018} and offers valuable insights for security monitoring and the design of effective control strategies. These sufficient conditions can be incorporated as constraints in the optimal power flow problem~\cite{2018Cui} and used to ensure voltage stability of the system~\cite{Application, CUI2025112158}.

Existing research has successfully developed sufficient conditions for the existence of power flow solutions. However, these conditions are often conservative and rely on restrictive assumptions. Given the inherently meshed nature of real-world transmission networks, it is important to develop solvability conditions that explicitly exploit such structure. Motivated by this, we leverage the topological structure of meshed networks, particularly their cyclic characteristics, to derive a sharper sufficient condition for the existence of a solution to the lossless real power flow equation. Since solvability conditions for the decoupled power flow model can help pave the way toward sharper and more robust conditions for the full coupled AC power flow equations, we focus in this paper on the lossless real power flow case.

The remainder of the paper is organized as follows. Section~\ref{Background} presents background and preliminaries. Section~\ref{PF_Formulation} introduces the problem formulation and assumptions. Section~\ref{Suff_Condition} derives the sufficient condition for the existence of a power flow solution. Section~\ref{Methodology} describes the methodology and implementation details. Section~\ref{Results} presents the numerical results on standard test cases and comparison with existing work. Finally, Section~\ref{Conclusion} concludes the paper.

\section{Background and Preliminaries}
\label{Background}

\subsection{Power System Model}

A power network is represented by a connected, undirected, weighted graph $G (\N, \E)$ where $\N$ is the set of nodes (buses) and $\E \subseteq \N\times\N$ is the set of edges (lines). An arbitrary orientation is assigned to each edge so that the edge is oriented from its source node to the sink node. We denote the number of buses and the number of lines by $n$ and $m$, respectively. For each line $\{i,j\}\in \E$, let $g_{ij}$ and $b_{ij}$ denote the series conductance and susceptance, respectively, so that the line admittance is given by $y_{ij}=g_{ij}+jb_{ij}$. In this work, we consider a lossless power system without shunt elements; therefore, $g_{ij}=0$ for all $(i,j)\in \E$, and each line admittance is purely imaginary, i.e., $y_{ij}=jb_{ij}$. The bus admittance matrix $Y \in \mathbb{C}^{n\times n}$ captures both the network's topology and its weights. The diagonal element $Y_{ii}$ is equal to $ jB_{ii}$, where $B_{ii}$ is the sum of the susceptances $b_{ij}$ of all lines incident to bus $i$. The off-diagonal element $Y_{ij}$ is $ jB_{ij}$, where $B_{ij}$ equals the negative of the susceptance of the line connecting buses $i$ and $j$. For all $i \in N$, $V_i$ is the voltage magnitude and $\theta_i$ is the phase angle. The full AC power flow model consists of both active and reactive power equations. In this work, we study only the lossless real power flow equations
\begin{equation}
P_i =V_i \sum_{j=1}^{n} V_j B_{ij} \sin(\theta_i - \theta_j), \quad i \in \N,
\label{PF:Real_Power_lossless}
\end{equation}
where the unknown variables are the phase angles $\theta$, while the voltage magnitudes $V$ are fixed and known, corresponding to the base case operating point of the system.

\subsection{Algebraic Graph Theory}
Considering an arbitrary orientation assigned to each edge of the graph $G(\N,\E)$, the node-edge incidence matrix is given by $A \in \{-1,1,0 \}^{n\times m}$. The proposed sufficient condition is independent of the chosen orientation. The element $A_{ke}$ will be $+1$ if $k$ is the source node of the edge $e$, $-1$ if $k$ is the sink node of the edge $e$, and zero otherwise. The diagonal susceptance matrix $D \in \mathbb{R}^{m \times m}$ encodes the edge weights. For an edge $e$ connecting buses $i$ and $j$, the corresponding diagonal entry of $D$ is given by $D_{ee} = V_i V_j B_{ij}$. Throughout the paper, we assume inductive lines for every line $(i,j)\in\mathcal E$, and hence
$D\succ 0$. The Laplacian matrix of the graph $G$ is defined as $L = A D A^\top \in \mathbb{R}^{n\times n}$, which has a one-dimensional nullspace $\ker(L) = \spn\left\{\mathbbold{1}\right\}$, where $\mathbbold{1}$ is the vector of all $1$'s. The (Moore-Penrose) pseudoinverse of $L$ is denoted by $L^\dagger$.

A walk in a graph is a sequence of nodes such that each consecutive pair of nodes is connected by an edge. If the walk starts and ends at the same node, it is called a closed walk. A cycle is a closed walk in which no node is repeated except the first and last. A cycle in a graph can be represented by a cycle vector $c_k \in \mathbb{R}^m$, whose entries indicate whether each edge belongs to the cycle and, if so, whether its orientation agrees with the chosen traversal direction of the cycle. Specifically,
\begin{equation*}
(c_k)_e =
\begin{cases}
+1, & \text {if $e$'s orientation aligns with the traversal},\\
-1, & \text{if $e$'s orientation is opposite to the traversal},\\
0,  & \text{otherwise}.
\end{cases}
\end{equation*}

Any set of linearly independent cycle vectors that spans the cycle space forms a cycle basis. The cycle space of the graph $G$ is $\ker(A)$, and its dimension is $q = m-n+1$ \cite[Theorem $9.5$]{FB-LNS}. The cutset space of the graph $G$ is the column space of $A^\top$. Moreover, the cycle space and cutset space are orthogonal complements in $\mathbb{R}^m$, so that
\begin{equation*}
    \mathbb{R}^m = \ker(A) \oplus \img(A^\top).
\end{equation*}
Given a cycle basis $\Sigma = \{ c_1, \ldots, c_{m-n+1} \}$, $C_{\Sigma}$ denotes the cycle basis matrix whose columns are the basis cycle vectors.

\section{Power Flow equation}
\label{PF_Formulation}

Collecting the bus power injections into the vector $P \in ~\mathbb{R}^n$, \eqref{PF:Real_Power_lossless} can be written compactly as
\begin{equation}
P = A D \sin(A^\top \theta).
\label{PF:matrix_form}
\end{equation}
For a real-value vector $x$, the functions $\sin(x)$ and $\arcsin(x)$ are understood componentwise. In this paper, we are interested in deriving a sufficient condition on the nodal injection $P \in \mathbbold{1}^\perp \coloneqq \{x\in\mathbb{R}^n : \mathbbold{1}^\top x = 0\}$ under which \eqref{PF:matrix_form} admits a solution.

We see from \eqref{PF:Real_Power_lossless} and \eqref{PF:matrix_form} that the nodal power injection $P_i$ is expressed in terms of the angle differences $\theta_i - \theta_j$ for lines $(i,j)$ incident to bus $i$. For each line $(i,j) \in \E$, define $\tilde{b}_{ij}:= V_iV_j B_{ij}$. Then the real power flow on line $(i,j)$ is
\begin{equation}
f_{ij} = \tilde{b}_{ij} \sin(\theta_i - \theta_j).
\label{PF:Real_Power_lossless_branch}
\end{equation}
In vector form, this becomes 
\begin{equation}
    f \coloneqq  D \sin(A^\top \theta) \in \mathbb{R}^m.
\end{equation}
Using this definition, the compact power flow equation can be rewritten as
\begin{equation}
P = Af.
\label{nodal_balance}
\end{equation}
We refer to \eqref{nodal_balance} as the \emph{nodal balance equation}, and any vector $f \in \mathbb{R}^m$ satisfying it is called a \emph{line flow solution}.

The nodal balance equation \eqref{nodal_balance} is solvable if and only if $P \in \img(A)$. Since the graph is connected, $\img(A) = \mathbbold{1}^\perp$, and therefore \eqref{nodal_balance} is solvable for every nodal injection vector \(P \in \mathbbold{1}^\perp\). Moreover, because \(L=ADA^\top\) and \(\img(L)=\mathbbold{1}^\perp\), a particular solution is given by
\begin{equation}
\hat f = DA^\top L^\dagger P.
\end{equation}
Indeed,
\begin{equation}
A\hat f = ADA^\top L^\dagger P = LL^\dagger P = P,
\end{equation}
where the last equality follows from the fact that $LL^\dagger$ is the orthogonal projector onto $\img(L) = \mathbbold{1}^\perp$ and $P \in \mathbbold{1}^\perp$.

A standard result in linear algebra~\cite{Strang} states that, if $\hat{f}$ is a particular solution of~\eqref{nodal_balance}, then the set of all line flow solutions to~\eqref{nodal_balance} is given by
\begin{equation}
f = \hat{f} + s, \qquad s \in \ker(A).
\label{feasible_solution}
\end{equation}
Since $\ker(A)$ is the cycle space of the graph, any $s \in \ker(A)$ is referred to as a cycle flow. It follows from \eqref{feasible_solution} that any two line flow solutions differ by a cycle flow.

While every power flow solution $\theta$ induces a line flow solution through $f_{ij} = \tilde{b}_{ij} \sin(\theta_i - \theta_j)$, the converse is not true in general: a line flow solution $f$ does not necessarily correspond to a power flow solution. The next proposition characterizes exactly when this correspondence holds.

\begin{proposition}
\label{power_flow_solution}
Given a line flow solution $f^*$ satisfying \eqref{nodal_balance} and a cycle basis matrix $C_{\Sigma}$ of $G$, there exists a {unique} power flow solution $\theta^*$ {up to an additive constant} such that
\begin{equation}
f^*_{ij} = \tilde{b}_{ij}\sin(\theta^*_i-\theta^*_j),\quad  |\theta^*_i-\theta^*_j| \le \frac{\pi}{2}, \quad \forall (i,j)\in \E,
\label{main_condition}
\end{equation}
if and only if the following two conditions hold:
\begin{enumerate}
\item Edge-wise feasibility: \label{item:cond1}
\begin{equation}
|f^*_{ij}| \le \tilde{b}_{ij}, \quad \forall (i,j) \in \E, 
\label{edge_wise}
\end{equation}

\item Cycle consistency: \label{item:cond2}
\begin{equation} \label{cycle_consistent}
C_{\Sigma}^\top \arcsin(D^{-1}f^*)=  \mathbbold{0}.
\end{equation}
\end{enumerate}
\end{proposition}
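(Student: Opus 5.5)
The plan is to reduce the statement to the fact that a vector in $\mathbb{R}^m$ is of the form $A^\top\theta$ if and only if it is orthogonal to the cycle space $\ker(A)$. Concretely, set $\delta := \arcsin(D^{-1}f^*)\in\mathbb{R}^m$, which is well defined exactly when \eqref{edge_wise} holds. Its entries satisfy $|\delta_e|\le \pi/2$. The proof then consists of showing that the angle-difference vector of any solution satisfying \eqref{main_condition} must equal $\delta$, and that $\delta$ is an angle-difference vector exactly when \eqref{cycle_consistent} holds.

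\emph{Necessity.} Suppose $\theta^*$ satisfies \eqref{main_condition}.
\begin{itemize}
\item Since $|\sin|\le 1$ and $\tilde b_{ij}>0$ (because $D\succ 0$), we get $|f^*_{ij}|\le \tilde b_{ij}$, which is \eqref{edge_wise}.
\item Because $\arcsin$ is the inverse of $\sin$ on $[-\pi/2,\pi/2]$ and $|\theta^*_i-\theta^*_j|\le\pi/2$, we have $\theta^*_i-\theta^*_j=\arcsin(f^*_{ij}/\tilde b_{ij})$. This is the one place where the angle bound is essential. Hence $A^\top\theta^*=\delta$.
\item Then $C_\Sigma^\top\delta = (AC_\Sigma)^\top\theta^*=0$, because the columns of $C_\Sigma$ lie in $\ker(A)$.
\end{itemize}

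\emph{Sufficiency.} Assume \eqref{edge_wise} and \eqref{cycle_consistent} hold.
\begin{itemize}
\item Condition \eqref{cycle_consistent} says that $\delta$ is orthogonal to every basis cycle. Since $\Sigma$ spans $\ker(A)$, $\delta\perp\ker(A)$.
\item By the decomposition $\mathbb{R}^m=\ker(A)\oplus\img(A^\top)$ with orthogonal summands, $\delta\in\img(A^\top)$. So there is $\theta^*$ with $A^\top\theta^*=\delta$. One explicit choice is $\theta^*=L^\dagger A D\delta$, but existence suffices.
\item Componentwise, $\theta^*_i-\theta^*_j=\delta_e\in[-\pi/2,\pi/2]$ and $\tilde b_{ij}\sin(\delta_e)=f^*_{ij}$, so \eqref{main_condition} holds.
\end{itemize}

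\emph{Uniqueness.} Any two solutions satisfying \eqref{main_condition} both have angle-difference vector $\delta$, by the argument in the necessity part. Their difference therefore lies in $\ker(A^\top)=\spn\{\mathbbold{1}\}$, since $G$ is connected. So the solution is unique up to an additive constant.

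No step is a real obstacle. The points needing care are the invertibility of $\sin$ on $[-\pi/2,\pi/2]$ (this is what forces $A^\top\theta^*=\delta$), and the use of the fact that $\Sigma$ spans the whole cycle space, so that orthogonality to the basis cycles gives orthogonality to $\ker(A)$. It is also worth noting that the condition does not depend on the choice of $\Sigma$, because $\ker(C_\Sigma^\top)=\ker(A)^\perp$ for every cycle basis $\Sigma$.
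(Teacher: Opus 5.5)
Your proposal is correct and follows essentially the same route as the paper: set $\delta=\arcsin(D^{-1}f^*)$, use invertibility of $\sin$ on $[-\pi/2,\pi/2]$ to get $A^\top\theta^*=\delta$ (hence $C_\Sigma^\top\delta=\mathbbold{0}$) for necessity, and use $\ker(C_\Sigma^\top)=\img(A^\top)$ to produce $\theta^*$ for sufficiency. Your uniqueness step is slightly more explicit than the paper's: you note that every solution satisfying \eqref{main_condition} must have angle-difference vector exactly $\delta$, so any two such solutions differ by an element of $\ker(A^\top)=\spn\{\mathbbold{1}\}$.
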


\begin{proof}
We first prove the ``only if'' part.
Given a power flow solution $\theta^*$ for $f^*$ as given by equation~\eqref{main_condition}, we have
\begin{equation}
    f^*_{ij}/\tilde{b}_{ij} = \sin(\theta^*_i-\theta^*_j) \in [-1,1]. 
\label{Condn_i}
\end{equation}
So, condition \ref{item:cond1} is satisfied. Since $|\theta_i^* - \theta_j^*|$ is bounded by $\pi/2$, we know
\begin{equation}
    \arcsin \bigl(f^*_{ij} / \tilde{b}_{ij} \bigr) = \theta^*_i-\theta^*_j.
\end{equation}
The above equation can be written in vector form as:
\begin{equation}
    \arcsin(D^{-1}f^*) = A^\top \theta^*.
\end{equation}
Premultiplying by $C_\Sigma$ gives
\begin{equation}
    C_\Sigma^\top \arcsin(D^{-1}f^*) = C_\Sigma^\top A^\top \theta^* = (AC_\Sigma)^\top \theta^* =  \mathbbold{0},
\end{equation}
where the last equality holds since each column of $C_\Sigma$ lies in $\ker(A)$. Therefore, condition \ref{item:cond2} is also satisfied.

For the other direction, we show that there exists a $\theta^*$ corresponding to $f^*$ whenever conditions \ref{item:cond1} and \ref{item:cond2} hold. By condition \ref{item:cond1}, the vector
\begin{equation}
    \delta \coloneqq \arcsin(D^{-1}f^*)
\end{equation}
is well defined, with each component lying in $[-\pi/2,\pi/2]$. In addition, condition \ref{item:cond2} implies $C_\Sigma^\top\delta =  \mathbbold{0}$. Since the cycle space and cutset space are orthogonal complements to each other, we have $\ker(C_{\Sigma}^\top) = \img(A^\top)$. It follows that there exists $\theta^* \in \mathbb{R}^n$ such that $\delta = A^\top \theta^*$. Substituting the definition of $\delta$ and applying $\sin(\cdot)$ componentwise yields
\begin{equation}
    D^{-1}f^* = \sin(A^\top\theta^*).
\end{equation}
Equivalently, $f_{ij}^* = \tilde{b}_{ij}\sin(\theta^*_i-\theta^*_j)$ for every line $(i,j) \in \E$. Since each $\delta_{ij} \in [-\pi/2,\pi/2]$, the corresponding angle difference satisfies $|\theta_i - \theta_j| \le \pi/2$ for each $(i,j)\in\E$. Thus, $\theta^*$ is a power flow solution satisfying \eqref{main_condition}. Finally, $\theta^*$ is unique up to an additive constant, since $A^\top \mathbbold{1}=\mathbbold{0}$ and the graph is connected~\cite[Theorem~5.1]{n_torus}.
\end{proof}

We are now interested in deriving a sufficient condition for a given set of power injections $P$ and network topology, such that the line flow $f$ obtained satisfies conditions~\ref{item:cond1} and~\ref{item:cond2} of Proposition~\ref{power_flow_solution}. These conditions guarantee the existence of a unique power flow solution for the specified nodal power injection up to an additive constant.

\section{Sufficient Condition for Power Flow Equation Solvability}
\label{Suff_Condition}

To derive a sufficient condition for power flow solvability, we seek a family of line flow solutions that preserves the nodal balance equation {\eqref{nodal_balance}} while allowing adjustments in the cycle flow. By \eqref{feasible_solution}, every line flow solution can be written as
\begin{equation}
f(\lambda)=\hat f + C_{\Sigma}\lambda, \qquad \lambda \in \mathbb{R}^q,
\label{eq:cycle_parameterization}
\end{equation}
where $q=m-n+1$ and $C_{\Sigma}\in\mathbb{R}^{m\times q}$ is the cycle basis matrix. That is, $f(\lambda)$ is a line flow solution parameterized by cycle flow.

We next define the line flow vector normalized by $D$ as
\begin{equation}
z(\lambda) \coloneqq D^{-1}f(\lambda) = D^{-1}\hat f + D^{-1}C_{\Sigma}\lambda = z_0 + H\lambda,
\label{eq:z_lambda}
\end{equation}
where $z_0 \coloneqq D^{-1}\hat f$ and $H \coloneqq D^{-1}C_{\Sigma}$. We note the edge-wise feasibility condition in Proposition~\ref{power_flow_solution} requires
\begin{equation}
|z_{ij}(\lambda)| \le 1, \quad (i,j)\in\E.
\label{eq:edge_feas_box}
\end{equation}
{Thus, the feasible domain can be defined as
\begin{equation}
    \Omega
    =
    \left\{
    \lambda\in\mathbb{R}^q :
    |z_{ij}(\lambda)|\le1,\ 
    \forall (i,j)\in\mathcal{E}
    \right\}.
    \label{eq:feasible_domain}
\end{equation}
Inside the feasible domain $\Omega$, the argument of the inverse sine function remains admissible for all edges.}
Now we can denote the cycle consistency expression in \eqref{cycle_consistent} parametrized by the cycle flow indicator $\lambda$ as
\begin{equation} \label{eq:g_lambda}
    g(\lambda) \coloneqq C_{\Sigma}^{\top}\arcsin(z(\lambda)) = C_{\Sigma}^{\top}\arcsin(z_0 + H\lambda),
\end{equation}
where $g(\lambda)$ is a $q$-dimensional map whose components represent the sum of phase angle differences around the corresponding basis cycles. Therefore, by Proposition~\ref{power_flow_solution}, a {necessary and} sufficient condition for the existence of a power flow solution {satisfying \eqref{main_condition}} is the existence of {a $\lambda \in \Omega$} that satisfies $g(\lambda) = \mathbbold{0}$. {A key observation, upon which our solvability condition is built, is that $g(\lambda)$ is a monotone function. This fact is substantiated in the following lemma:}
\begin{lemma} \label{thm:monotone}
    {The function $g(\lambda)$ defined in \eqref{eq:g_lambda} is strongly monotone for $\lambda \in \inte \Omega$. In particular,
    \begin{equation}
        \nabla g(\lambda) \succeq M := C_\Sigma^\top D^{-1}C_\Sigma, \quad \lambda\in\inte\Omega.
    \end{equation}
    Furthermore, it is the gradient of a strongly convex function.}
\end{lemma}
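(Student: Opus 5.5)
The plan is to compute the Jacobian of $g$ explicitly and bound it from below. On $\inte\Omega$ every component of $z(\lambda)=z_0+H\lambda$ lies in $(-1,1)$, so $\arcsin$ is smooth there. Its derivative $1/\sqrt{1-z_e^2}$ is finite and at least $1$. By the chain rule,
\begin{equation*}
\nabla g(\lambda) = C_\Sigma^\top \Lambda(\lambda) H = C_\Sigma^\top \Lambda(\lambda) D^{-1} C_\Sigma,
\end{equation*}
where $\Lambda(\lambda)=\operatorname{diag}\bigl(1/\sqrt{1-z_e(\lambda)^2}\bigr)$. Since $\Lambda$ and $D^{-1}$ are diagonal, this matrix is symmetric. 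Moreover $\Lambda(\lambda)D^{-1}-D^{-1}$ is diagonal with nonnegative entries $D_{ee}^{-1}\bigl(1/\sqrt{1-z_e^2}-1\bigr)\ge 0$, because $D\succ0$. Congruence by $C_\Sigma$ then gives
\begin{equation*}
\nabla g(\lambda) - M = C_\Sigma^\top\bigl(\Lambda(\lambda)D^{-1}-D^{-1}\bigr)C_\Sigma \succeq 0 .
\end{equation*}

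Next, $M$ is positive definite. The columns of $C_\Sigma$ form a basis of $\ker(A)$, so $C_\Sigma$ has full column rank $q$. Hence $x^\top M x=\|D^{-1/2}C_\Sigma x\|^2>0$ for $x\ne0$, and $\mu:=\lambda_{\min}(M)>0$.

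Strong monotonicity follows by integrating along segments. The set $\Omega$ is an intersection of slabs, hence convex, so $\inte\Omega$ is convex too. For $\lambda,\lambda'\in\inte\Omega$, the segment between them stays in $\inte\Omega$, and
\begin{equation*}
(\lambda-\lambda')^\top\bigl(g(\lambda)-g(\lambda')\bigr)=\int_0^1 (\lambda-\lambda')^\top \nabla g(\lambda'+t(\lambda-\lambda'))(\lambda-\lambda')\,dt \ge \mu\|\lambda-\lambda'\|^2 .
\end{equation*}

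For the potential, I would define
\begin{equation*}
E(\lambda)=\sum_{e} D_{ee}\,\phi\bigl(z_e(\lambda)\bigr), \qquad \phi(x)=x\arcsin x+\sqrt{1-x^2},
\end{equation*}
so that $\phi'(x)=\arcsin x$. Then $\nabla E(\lambda)=H^\top D\arcsin(z(\lambda))=C_\Sigma^\top\arcsin(z(\lambda))=g(\lambda)$, using $H^\top D = C_\Sigma^\top D^{-1}D=C_\Sigma^\top$. The Hessian of $E$ is $\nabla g\succeq M\succ0$ on the convex set $\inte\Omega$, so $E$ is $\mu$-strongly convex there. The symmetry of $\nabla g$ alone would already give the potential on a convex domain, but the explicit $E$ is cleaner.

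The main obstacle is a technical one: keeping everything on $\inte\Omega$. On the boundary the derivative of $\arcsin$ blows up, so the Jacobian bound is only claimed on the interior, and convexity of $\inte\Omega$ is what makes the segment argument legitimate. I also need $\inte\Omega$ to be nonempty. If it is empty, the claims are vacuous.
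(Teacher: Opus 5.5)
Your proof is correct and follows the paper's argument. It uses the same chain-rule Jacobian $C_\Sigma^\top W(\lambda)D^{-1}C_\Sigma$, the bound $W(\lambda)\succeq I$ giving $\nabla g(\lambda)\succeq M\succ 0$ via full column rank of $C_\Sigma$, and strong monotonicity by integrating along segments, which the paper only does later in the proof of Theorem~\ref{thm:sufficient_condition}. The one real difference is that you write the potential explicitly as $E(\lambda)=\sum_e D_{ee}\,\phi(z_e(\lambda))$ with $\phi(x)=x\arcsin x+\sqrt{1-x^2}$, whereas the paper only infers that a potential exists from the symmetry of $\nabla g$; your version is more concrete and does not rely on the unstated fact that $\inte\Omega$ is simply connected (it is, since it is convex).
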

\begin{proof}
    {The Jacobian of $g(\lambda)$ is
    \begin{equation} \label{eq:gradg}
        \nabla g(\lambda) = C_\Sigma^\top W(\lambda) H = C_\Sigma^\top W(\lambda) D^{-1}C_\Sigma
    \end{equation}
    where $W(\lambda)$ is a diagonal matrix with diagonal elements $(1-z_{ij}(\lambda)^2)^{-\frac{1}{2}}, (i,j)\in\mathcal{E}$. When $\lambda \in \inte \Omega$, $W(\lambda) \succeq I$ holds. In addition, $D^{-1}$ is also a positive definite matrix given positive voltages and inductive lines. Furthermore, $C_\Sigma$ has full column rank since the cycle bases are linearly independent. It follows that $\nabla g(\lambda) \succeq M \succ 0$, which means $g(\lambda)$ is strongly monotone for $\lambda \in \inte \Omega$. Since $\nabla g(\lambda)$ is symmetric for all $\lambda\in\inte\Omega$, $g$ admits a scalar potential $\Phi$ on $\inte\Omega$. Furthermore, $\Phi$ is strongly convex on $\inte \Omega$ since $\nabla g \succeq M$.}
\end{proof}

{Let $\lambda_0 \in \inte\Omega$ be a given coordinate vector. We next present an explicit sufficient condition that certifies the existence of a perturbation $\Delta\lambda$ such that $\lambda_0+\Delta\lambda \in \Omega$ and $g(\lambda_0+\Delta\lambda)=\mathbbold{0}$. The condition is given in terms of the weighted Euclidean norms induced by $M \succ 0$ defined as follows: For a positive definite matrix $M\succ 0$, the $M$-norm of the vector $x$ is defined as $\|x \|_M \coloneqq \sqrt{x^\top M x}$; and its dual norm is defined as $\|x \|_{M,*} \coloneqq \sqrt{x^\top M^{-1} x}$.}

\begin{theorem}[Sufficient Power Flow Solvability Condition]
\label{thm:sufficient_condition} Consider a connected power system with $P \in \mathbbold{1}^\perp$. Let $f(\lambda)$, $z(\lambda)$, and $g(\lambda)$ be defined as in \eqref{eq:cycle_parameterization}, \eqref{eq:z_lambda}, and \eqref{eq:g_lambda}, respectively. Consider a point $\lambda_0 \in \inte\Omega$. Define $r$ as
\begin{equation}
    r \coloneqq
    \min_{(i,j) \in \E, \| H_{ij,:}\|_{M,*} > 0}
    \frac{1-|z_{ij}(\lambda_0)|}{\|H_{ij,:}\|_{M,*}},
    \label{Euclidian_Ball}
\end{equation}
where $M \coloneqq C_\Sigma^\top D^{-1}C_\Sigma$. If 
\begin{equation} \label{eq:condition}
    \boxed{r > \| g(\lambda_0)\|_{M,*}}, 
\end{equation}
then there exists a unique $\lambda^* \in \inte\Omega$ such that $g(\lambda^*) = \mathbbold{0}$ with $\|\lambda^* - \lambda_0\|_M \le \|g(\lambda_0)\|_{M,*}$. Consequently, $f(\lambda^*)$ is a line flow solution satisfying conditions \ref{item:cond1} and \ref{item:cond2} in Proposition \ref{power_flow_solution}, and the real power flow equations \eqref{PF:matrix_form} admit a unique solution up to an additive constant satisfying \eqref{main_condition}.
\end{theorem}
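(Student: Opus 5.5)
The plan is to combine the geometry of $\Omega$ in the $M$-norm with the strong monotonicity from Lemma~\ref{thm:monotone}, using a Newton-free continuation or convex-minimization argument on an $M$-ball around $\lambda_0$.

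\textbf{Step 1: an $M$-ball inside $\inte\Omega$.} Let $B_\rho:=\{\lambda:\|\lambda-\lambda_0\|_M\le \rho\}$. For $\lambda=\lambda_0+\Delta\lambda$ we have $z_{ij}(\lambda)=z_{ij}(\lambda_0)+H_{ij,:}\Delta\lambda$. By Cauchy--Schwarz in the dual pair of norms, $|H_{ij,:}\Delta\lambda|\le \|H_{ij,:}\|_{M,*}\|\Delta\lambda\|_M$. Hence, if $\rho<r$, every edge satisfies
\[
|z_{ij}(\lambda)|\le |z_{ij}(\lambda_0)|+\|H_{ij,:}\|_{M,*}\,\rho<1 .
\]
Edges with $H_{ij,:}=0$ keep $|z_{ij}|=|z_{ij}(\lambda_0)|<1$. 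So $B_\rho\subset\inte\Omega$ for every $\rho<r$. We then fix $\rho:=\|g(\lambda_0)\|_{M,*}<r$, which is allowed by \eqref{eq:condition}.

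\textbf{Step 2: existence via the strongly convex potential.} On $\inte\Omega$ the map $g=\nabla\Phi$ satisfies $\nabla g\succeq M$ by Lemma~\ref{thm:monotone}. Integrating along segments, which is legitimate because $\inte\Omega$ is convex as an intersection of slabs, gives
\[
(g(\lambda)-g(\lambda_0))^\top(\lambda-\lambda_0)\ge\|\lambda-\lambda_0\|_M^2 .
\]
Therefore, on the boundary $\|\lambda-\lambda_0\|_M=\rho$,
\[
g(\lambda)^\top(\lambda-\lambda_0)\ge \rho^2- g(\lambda_0)^\top(\lambda-\lambda_0)\ge\rho^2-\|g(\lambda_0)\|_{M,*}\,\rho=0 ,
\]
again by dual-norm Cauchy--Schwarz.

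I would now minimize the continuous, strictly convex function $\Phi$ over the compact convex set $B_\rho$ and obtain a minimizer $\lambda^*$. There are two cases.
\begin{itemize}
\item If $\lambda^*$ is interior, then $g(\lambda^*)=0$.
\item If $\lambda^*$ lies on the boundary, the KKT conditions give $g(\lambda^*)=-\mu M(\lambda^*-\lambda_0)$ for some $\mu\ge 0$. Then $g(\lambda^*)^\top(\lambda^*-\lambda_0)=-\mu\rho^2\le0$, which together with the inequality above forces $\mu=0$, so again $g(\lambda^*)=0$.
\end{itemize}
Alternatively, the same boundary inequality can be fed directly into the standard acute-angle (Poincar\'e--Miranda/Brouwer) lemma. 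A shorter route is also available: the strong monotonicity bound gives $\|g(\lambda)\|_{M,*}\ge\|\lambda-\lambda_0\|_M-\|g(\lambda_0)\|_{M,*}$, which yields the same localization. The case $\rho=0$ is trivial, with $\lambda^*=\lambda_0$. In either case $\lambda^*\in B_\rho\subset\inte\Omega$ and $\|\lambda^*-\lambda_0\|_M\le\|g(\lambda_0)\|_{M,*}$.

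\textbf{Step 3: uniqueness and conclusion.} Suppose $\lambda_1,\lambda_2\in\inte\Omega$ are two zeros of $g$. Applying the strong monotonicity inequality on the convex set $\inte\Omega$ gives $0\ge\|\lambda_1-\lambda_2\|_M^2$, so $\lambda_1=\lambda_2$. Uniqueness thus holds in all of $\inte\Omega$, not only in the ball. Finally, $f(\lambda^*)=\hat f+C_\Sigma\lambda^*$ satisfies the nodal balance \eqref{nodal_balance}, satisfies the edge-wise feasibility \eqref{edge_wise} (strictly), and satisfies the cycle consistency \eqref{cycle_consistent}. Proposition~\ref{power_flow_solution} then yields the unique $\theta^*$ up to an additive constant.

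The main obstacle is the existence step on a possibly non-open domain. The care lies in ensuring that the monotonicity inequality is only used on the convex set $\inte\Omega$ where Lemma~\ref{thm:monotone} applies, and that the closed ball really sits inside $\inte\Omega$. The latter is exactly why the strict inequality $r>\|g(\lambda_0)\|_{M,*}$ is needed.
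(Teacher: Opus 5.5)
Your proposal is correct and follows essentially the same route as the paper: an $M$-ball of radius below $r$ inside $\inte\Omega$, the strong-monotonicity boundary inequality, and minimization of the strongly convex potential $\Phi$ over that ball, with the minimizer forced to be a zero of $g$. The differences are minor. You take $\rho=\|g(\lambda_0)\|_{M,*}$ and use the KKT multiplier to handle the non-strict boundary inequality, whereas the paper takes $\rho$ strictly between $\|g(\lambda_0)\|_{M,*}$ and $r$, gets a strict contradiction, and proves the distance bound separately. You also make uniqueness on all of $\inte\Omega$ explicit. Your intermediate expression should read $\rho^2+g(\lambda_0)^\top(\lambda-\lambda_0)$ rather than $\rho^2-g(\lambda_0)^\top(\lambda-\lambda_0)$, but the final bound is unaffected.
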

\vspace{1mm}
\begin{proof}
{First, we characterize the conditions on the existence of $\Delta\lambda$ such that $\lambda_0 + \Delta\lambda \in \Omega$ and $g(\lambda_0 + \Delta\lambda) = \mathbbold{0}$. We first derive the condition under which $\lambda_0 + \Delta\lambda \in \Omega$. For this to hold, condition~\eqref{eq:edge_feas_box} needs to be satisfied, i.e.,
\begin{equation}
    |z_{ij}(\lambda_0+\Delta\lambda)| \le 1,
     \quad (i,j)\in\E .
\end{equation}
Since  $z(\lambda_0+\Delta\lambda) =z(\lambda_0)+H\Delta\lambda$, we require
\begin{equation}
    |z_{ij}(\lambda_0)+H_{ij,:}\Delta\lambda| \le 1, \quad (i,j)\in\E.
\end{equation}
Using the triangle inequality and Cauchy-Schwarz inequality in the $M$-norm and its dual norm, we have
\begin{multline}
    |z_{ij}(\lambda_0)+H_{ij,:}\Delta\lambda| = |z_{ij}(\lambda_0)+H_{ij,:}M^{-1} M \Delta\lambda| \\
    \leq
    |z_{ij}(\lambda_0)| +
    \|H_{ij,:}\|_{M,*}\|\Delta\lambda\|_M, \quad (i,j)\in\E. 
\end{multline}
Hence, feasibility is guaranteed whenever $\|\Delta\lambda\|_M \le (1-|z_{ij}(\lambda_0)|)/\|H_{ij,:}\|_{M,*}$ for all edges with $\|H_{ij,:}\|_{M,*}>0$. In other words, a sufficient condition for $\lambda_0 + \Delta\lambda \in \Omega$ is}
\begin{equation}
    {\|\Delta\lambda\|_M
    \le
    \min_{(i,j) \in \E, \| H_{ij,:}\|_{M,*} > 0}
    \frac{1-|z_{ij}(\lambda_0)|}{\|H_{ij,:}\|_{M,*}} = r.}
\end{equation}

{Let $d\coloneqq \|g(\lambda_0)\|_{M,*}$. If $d = 0$, then $g(\lambda_0) = \mathbbold{0}$ and the result follows by taking $\lambda^* = \lambda_0$. Suppose $d > 0$. Since $d < r$ by condition \eqref{eq:condition}, we choose $\rho$ such that $d < \rho < r$. Consider the $M$-ball
\begin{equation}
    \mathcal{B}_\rho \coloneqq \{ \lambda_0 + \Delta\lambda: \|\Delta\lambda\|_M \le \rho \}.
\end{equation}
The preceding analysis has shown that $\mathcal{B}_\rho \subset \inte\Omega$.}

{According to Lemma \ref{thm:monotone}, $\nabla g(\lambda) \succeq M$ for all $\lambda\in\inte\Omega$. Since $\mathcal{B}_\rho \subset \inte\Omega$, for any $\Delta\lambda$ with $\|\Delta\lambda\|_M \le \rho$, we have
\begin{multline}
    \hspace{-0.1in}\bigl( g(\lambda_0+\Delta\lambda) - g(\lambda_0) \bigr)^\top \Delta \lambda = \int_0^1 \Delta\lambda^\top \nabla g(\lambda_0 + t\Delta\lambda)\Delta\lambda \,\mathrm{d}t \\
    \ge \Delta\lambda^\top M \Delta\lambda = \|\Delta\lambda\|_M^2.
\end{multline}
It follows
\begin{align}
    \hspace{-0.05in}g^\top(\lambda_0+\Delta\lambda) \Delta\lambda &\ge \|\Delta\lambda\|_M^2 + g^\top(\lambda_0) \Delta\lambda \nonumber \\
    &\ge \|\Delta\lambda\|_M^2 - \|g(\lambda_0)\|_{M,*}\|\Delta\lambda\|_M \nonumber\\
    &= \|\Delta\lambda\|_M(\|\Delta\lambda\|_M - \|g(\lambda_0)\|_{M,*}). \label{eq:monotone:2}
\end{align}
Therefore, for perturbations satisfying $\|\Delta\lambda\|_M=\rho$, we obtain $g^\top(\lambda_0+\Delta\lambda) \Delta\lambda \ge \rho(\rho-d) > 0$.}

{By Lemma \ref{thm:monotone}, $g(\lambda)$ is the gradient of a twice-differentiable strongly convex function $\Phi(\lambda)$ on $\inte\Omega$. Since $B_\rho$ is compact and
$B_\rho\subset\operatorname{int}\Omega$, $\Phi$ attains a unique
minimizer on $B_\rho$ \cite[Sect. 4.2.1]{boyd2004convex}. By the optimality condition for differentiable convex minimization over a convex set \cite[Sect. 4.2.3]{boyd2004convex}, any minimizer $\lambda^*$ satisfies
\begin{equation}
    \nabla \Phi^\top(\lambda^*)(\lambda^* - \lambda_0) = g^\top(\lambda^*) (\lambda^* - \lambda_0) \le 0.
\end{equation}
This contradicts condition \eqref{eq:monotone:2} that $g^\top (\lambda^*) (\lambda^* - \lambda_0) > 0$ for any $\lambda^*$ on the boundary of $\mathcal{B}_\rho$. Therefore, the unique minimizer must lie in the interior, and the unconstrained optimality condition is}
\begin{equation}
    \nabla\Phi(\lambda^*) = g(\lambda^*) =\mathbbold{0}.
\end{equation}

{It remains to show the stated bound. Since $g(\lambda^*)=\mathbbold{0}$, strong monotonicity gives
\begin{align}
    \|\lambda^*-\lambda_0\|_M^2
    &\le
    \bigl(g(\lambda^*)-g(\lambda_0)\bigr)^\top
    (\lambda^*-\lambda_0) \notag\\
    &=
    -g(\lambda_0)^\top(\lambda^*-\lambda_0) \notag\\
    &\le
    \|g(\lambda_0)\|_{M,*}\|\lambda^*-\lambda_0\|_M .
\end{align}
Therefore,}
\begin{equation}
    {\|\lambda^*-\lambda_0\|_M \le \|g(\lambda_0)\|_{M,*}.}
\end{equation}

{Finally, since \(\lambda^*\in\operatorname{int}\Omega\),
the line flow \(f(\lambda^*)\) satisfies the edge-wise feasibility
condition. Moreover, \(g(\lambda^*)=\mathbbold{0}\) is exactly the cycle
consistency condition. Thus, by Proposition \ref{power_flow_solution}, $f(\lambda^*)$
corresponds to a unique solution of the real power flow equations satisfying \eqref{main_condition} up to an additive constant.}
\end{proof}

\begin{remark}
    {The sufficient condition in Theorem \ref{thm:sufficient_condition} is invariant under the change of cycle basis as long as the initial $\lambda_0$ is scaled accordingly. In particular, it is invariant when the initial $\lambda_0$ is obtained via one Newton step from zero.}

    {Let $\Sigma'$ be an alternative cycle basis whose corresponding cycle basis matrix is $C_{\Sigma'}$. There exists a full-rank matrix $T \in \mathbb{R}^{q\times q}$ such that $C_{\Sigma'} = C_{\Sigma}T$. For the original cycle basis, the initial cycle flow coordinate via one-step Newton is $\lambda_0 = -J_0^{-1} g(0)$ where $J_0 = C_\Sigma^\top W(0) D^{-1}C_\Sigma$. For the alternative cycle basis, the initial cycle flow coordinate $\mu_0$ is given by $\mu_0 = -(J_0')^{-1}g'(0)$ with $J_0' = T^\top C_\Sigma^\top W(0) D^{-1}C_\Sigma T = T^\top J_0 T$ and $g'(0) = T^\top g(0)$. It follows $\lambda_0 = T\mu_0$. Consequently, the line flows under the initial cycle flow vectors are the same, as $z_0 + D^{-1}C_\Sigma T\mu_0 = z_0 + D^{-1}C_\Sigma \lambda_0$.}

    {In addition, the cycle consistency vectors under the two cycle bases are related by $g'(\lambda) = T^\top g(T\lambda)$. Similarly, $M' \coloneqq C_{\Sigma'}^\top D^{-1} C_{\Sigma'} = T^\top M T$ and $H' \coloneqq D^{-1}C_{\Sigma'} = HT$. It is then easy to verify that $\| g(\lambda_0)\|_{M,*}$ in condition \eqref{eq:condition} stays the same with a change of cycle basis, the same holds true for the radius $r$ in the same equation.} 
    
    {We have thus shown that condition \eqref{eq:condition} is invariant under the change of cycle basis. This suggests that the condition reflects the fundamental properties of the weighted cycle space rather than a particular coordinate system that describes it.}
\end{remark}

\section{Cycle Basis Characterization and Solvability Verification}
\label{Methodology}

In this section, we describe how the cycle basis is constructed and how the sufficient condition for solvability derived in Section \ref{Suff_Condition} can be verified. The proposed procedure is carried out entirely in the line flow domain. We start from the particular line flow solution $\hat f = DA^\top L^{\dagger} P$, construct a cycle basis matrix $C_\Sigma$, form the cycle flow parametrization by $\lambda \in \mathbb{R}^{q}$ as $f(\lambda) = \hat f + C_\Sigma \lambda$, and verify the theorem.

\subsection{Cycle Basis Construction}

In our implementation, the cycle basis is obtained through a Depth-First Search (DFS) algorithm. In this algorithm, an arbitrary node is first selected as the root of the DFS tree. Whenever DFS explores an edge from a visited node to an unvisited node, that edge is classified as a tree edge and oriented from the child toward the parent. This produces a directed spanning tree. Any non-tree edge encountered during DFS, together with the unique directed tree path connecting its end nodes, forms a directed fundamental cycle. Each such cycle contributes one column to the cycle basis matrix $C_\Sigma$. By changing the starting node of the DFS algorithm, different cycle bases can be obtained.

\subsection{Verification Procedure}

After discussing the cycle basis construction, we describe how to apply the sufficient solvability condition~{\eqref{eq:condition}} to certify the existence {and uniqueness} of a power flow solution.

With the cycle basis matrix $C_\Sigma$, we compute the particular line flow $\hat f = DA^\top L^{\dagger} P$, normalized line flow $z_{0} = D^{-1} \hat f$, and the normalized cycle basis matrix $H = D^{-1} C_\Sigma$. The parametrized line flows and the cycle consistency residual can subsequently be defined as $f(\lambda) = \hat f + C_\Sigma \lambda$, $z(\lambda) = z_{0} + H \lambda$, and $g(\lambda) = C_\Sigma^{\top} \arcsin(z(\lambda))$.

After the definitions above, the verification procedure can be broken down into the following two steps. First, one iteration of NR is used to get $\lambda_0 \in \inte\Omega$. If $\lambda_0 \notin \inte\Omega$, the certificate is inconclusive. Otherwise, the feasible $M$-ball of radius $r$ centered at $\lambda_0$ is then constructed inside the feasible domain $\Omega$. Second, we compute the cycle residual $\| g(\lambda_0)\|_{{M,*}}$.

Now, if $r>\| g(\lambda_0)\|_{{M,*}}$, it follows that there exists a perturbation $\Delta\lambda$ inside the feasible domain such that $g(\lambda_0+\Delta\lambda)=\mathbbold{0}.$
Hence, there exists $\lambda^*=\lambda_0+\Delta\lambda$ such that $g(\lambda^*)=\mathbbold{0}$. Therefore, the line flow $f(\lambda^*)=\hat f+C_\Sigma\lambda^*$ satisfies both edge-wise feasibility and cycle consistency conditions.  By Proposition~\ref{power_flow_solution}, it therefore corresponds to a solution of the lossless real power flow equation.

Algorithm~\ref{alg:feasibility_test} summarizes the verification procedure. The DFS traversal has computational complexity
\(O(|\mathcal N|+|\mathcal E|)\) since each node and edge is visited at most
once. Constructing the explicit cycle basis matrix \(C_\Sigma\) requires
additional work proportional to the total length of the fundamental cycles. The computationally expensive step in the proposed algorithm is the inversion of the matrix $M = C_\Sigma^\top D^{-1} C_\Sigma$, which is required for the computation of $r$ and $\| g(\lambda_0)\|_{{M,*}}$. However, this matrix depends only on the network topology, line parameters, and fixed voltage magnitudes. Therefore, it needs to be computed only once for a given network topology. All remaining steps involve simple algebraic operations. Although one Newton step is used to obtain a feasible initial point, it is relatively inexpensive compared to repeatedly solving the full nonlinear power flow equations.

\begin{algorithm}[t]
\caption{{Sufficient Solvability Condition Verification}}
\label{alg:feasibility_test}
\begin{algorithmic}[1]
\STATE Construct the cycle basis matrix $C_\Sigma$ and node-edge incidence matrix $A$, $M =C_\Sigma^\top D^{-1}C_\Sigma$
\STATE Compute $\hat f = DA^\top L^{\dagger}P$
\STATE Compute $z_0=D^{-1}\hat f$ and $H=D^{-1}C_\Sigma$
\STATE Find a feasible initial point $\lambda_0$ satisfying $|z(\lambda_0)|<1$
\STATE Compute $g(\lambda_0)=C_\Sigma^\top\arcsin(z_0+H\lambda_0)$
\STATE Compute $\| g(\lambda_0)\|_{{M,*}}$
\STATE Compute
\begin{equation*}  
r=    \min_{(i,j) \in \E, \| H_{ij,:}\|_{M,*} > 0}
    \frac{1-|z_{ij}(\lambda_0)|}{\|H_{ij,:}\|_{M,*}},
\end{equation*}
\IF{$r>\| g(\lambda_0)\|_{{M,*}}$}
\STATE Solvability certified by Theorem~\ref{thm:sufficient_condition}
\ELSE
\STATE Inconclusive
\ENDIF
\end{algorithmic}
\end{algorithm}

\section{Results}
\label{Results}
To assess the accuracy of the proposed sufficient condition, we evaluated its performance on different IEEE networks and compared the {results with the empirical loading limits obtained using the Continuation Power Flow (CPF) method.} The comparison is based on the maximum system stress level for which a feasible power flow solution exists. For larger networks, NR often underestimated the solvability limit. Therefore, CPF was used as the benchmark, since it provides a more reliable estimate of the actual solvability boundary.

System stress is introduced by uniformly scaling the power injections by a scalar parameter $y$, i.e., $P(y)=y P^0$, where $P^0$ denotes the nominal injection vector. The corresponding particular line flow solution is
\begin{equation}
\hat f(y)=DA^\top L^\dagger P(y)
= y\,DA^\top L^\dagger P^0.
\end{equation}

For each value of $y$, we apply the verification procedure described in Section~\ref{Methodology}. Let $y_{\mathrm{cert}}$ denote the maximum loading level for which the proposed sufficient condition certifies the existence of a power flow solution. Likewise, let $y_{\mathrm{CPF}}$ denote the largest loading level for which the CPF method converges to a solution of the lossless power flow equations when initialized in the standard manner, and the phase angle differences are bounded by $\pi/2$. The quantity $y_{\mathrm{cert}}$ therefore represents the certified solvability margin given by the proposed method, whereas $y_{\mathrm{CPF}}$ serves as a reference for the actual solvability boundary. To quantify the tightness of the certificate, we report $\eta \coloneqq \frac{y_{\mathrm{cert}}}{y_{\mathrm{CPF}}}$ as the tightness measure. A value of $\eta$ closer to $1$ indicates a less conservative certificate. The CPF simulations were carried out using \textsc{Matpower}~\cite{Matpower_Data}.

Table~\ref{tab:critical_ratio} compares the proposed test with two existing sufficient conditions from the literature, namely the $\lambda_2$ test~\cite{Lambda_2_test} and the $\infty$-norm test~\cite{Cutset_Projection}. It can be seen from the table that {for different test networks}, the proposed method yields the best certificate among all existing conditions, with the certified loading level close to the CPF-based reference. These results show that the proposed cycle constraint approach can provide a remarkably tight sufficient solvability condition for meshed networks of different sizes.

{To examine the impact of cycle basis selection on the proposed certification, we randomly varied the starting node used in the DFS-based cycle basis construction and repeated the process several times. The accuracy results obtained across all random samples remained identical, which supports the basis-invariance property of the proposed certificate.}

\begin{table}[t]
\centering
\caption{Certified stress ratio $y_{\mathrm{cert}}/y_{\mathrm{CPF}}$ for different test cases.}
\label{tab:critical_ratio}
\renewcommand{\arraystretch}{1.3}
\rowcolors{2}{gray!25}{white}
\begin{tabular}{lccc}
\toprule
\textbf{Test Case} 
& $\lambda_2$ test~\cite{Lambda_2_test}
& $\infty$-norm test~\cite{Cutset_Projection}
& Proposed test \\
\midrule
IEEE 9      & 16.01\% & 74.16\% & 98.71\% \\
IEEE 14     & 6.84\%  & 48.60\% & 83.66\% \\
IEEE RTS 24 & 3.33\%  & 51.55\% & 99.84\% \\
IEEE 30     & 2.41\%  & 48.56\% & 85.76\% \\
IEEE 39     & 3.24\%  & 67.43\% & 100\% \\
IEEE 57     & 0.61\%  & 56.53\% & 95.89\% \\
IEEE 118    & 0.20\%  & 41.04\% & 90.74\% \\
IEEE 300    & 0.00\%  & 39.67\% & 98.83\% \\
Polish 2383 & 0.02\%  & 23.05\% & 77.04\% \\
\bottomrule
\end{tabular}
\end{table}

\section{Conclusion}
\label{Conclusion}

In this paper, we developed a new sufficient condition for the solvability of the lossless real power flow equation based on the cycle constraints. {By developing a precise characterization of realizable line flows and exploiting the strong monotonicity of a cycle flow-based reformulation of the real power flow equations}, we obtained a mathematically rigorous and practically accurate condition for power flow solvability. Numerical results on different IEEE networks show that the proposed condition can be significantly less conservative than existing sufficient conditions and can provide a remarkably tight certificate of solvability.

The proposed framework provides a cycle-aware alternative to conventional DC-based approximations in settings where a rigorous sufficient condition for AC power flow solvability is desired. This work builds upon the DC power flow, which is always feasible, by preserving the nonlinear trigonometric terms to better approximate the AC power flow equations. The sufficient condition can be incorporated in optimal power flow, security assessment, and other applications where tractable yet physically meaningful feasibility guarantees are needed.

 
\bibliographystyle{IEEEtran}
\bibliography{Ref}

\end{document}